\newtheorem{theorem}{Theorem}[section]
\newtheorem{lemma}[theorem]{Lemma}
\newtheorem{corollary}[theorem]{Corollary}
\newtheorem{claim}[theorem]{Claim}
\newtheorem{proposition}[theorem]{Proposition}
\newtheorem{example}[theorem]{Example}
\newtheorem{question}[theorem]{Question}
\theoremstyle{definition}
\newtheorem{definition}[theorem]{Definition}
\numberwithin{equation}{section}
\newcommand{\N}{\mathbb{N}}
\newcommand{\R}{\mathbb{R}}
\newcommand{\T}{\mathcal{T}}
\begin{document}
\date{}
\title{Continuity of the Jones' set function $\T$}         
\author{Javier Camargo and Carlos Uzc\'ategui}

\maketitle

\begin{abstract}
Given a continuum $X$, for each $A\subseteq X$, the Jones' set function $\T$ is defined by $\T(A)=\{x\in X : \text{for each subcontinuum }K\text{ such that }x\in \textrm{Int}(K), \text{ then }K\cap A\neq\emptyset\}.$ We show that $\mathcal{D}=\{\T(\{x\}):x\in X\}$ is decomposition of $X$ when $\T$ is continuous. We present a characterization of the continuity of $\T$ and  answer several open questions posed by D. Bellamy. 
\end{abstract}

\section{Introduction}

Given a continuum $X$, for each $A\subseteq X$, the Jones' set function $\T$ is defined by 
$$\T(A)=\{x\in X : \text{for each subcontinuum }K\text{ such that }x\in \textrm{Int}(K), \text{ then }K\cap A\neq\emptyset\}.
$$ 
The set function $\T$ was defined by  F. Burton Jones \cite{jones} in order to study some properties related with aposyndesis of continua. Since then it has been studied extensively as an aid to classify continua (see for instance \cite{bellamy1}, \cite{bellamy1.5}, \cite{bellamy2}, \cite{bellamy3}, \cite{jones}, \cite{Leo}, \cite{macias1}, \cite{macias1.5}, \cite{macias2006}, \cite{Macias0}  and \cite{Macias}).

A continuum is a compact connected and nonempty metric space. Given a continuum $X$, $2^{X}$ denotes the collection of all closed and nonempty subsets of $X$; $2^X$ is itself a continuum if it is topologized by the Hausdorff metric. It is known that $\T(A)\in 2^{X}$, for each $A\in 2^{X}$ and thus is natural to look for conditions under which   $\T\colon 2^{X}\to 2^{X}$ is continuous. This problem has been addressed in several places  (see  \cite{bellamy1}, \cite{bellamy1.5}, \cite{macias1.5}, \cite{macias2006}, \cite{macias1}, \cite{Macias0}) and is the objective of this article.   There are two trivial cases where $\T$ is continuous: (a) If $X$ is  locally connected, then $\T(A)=A$ for all $A\in 2^{X}$ and  (b) if  $X$  is indecomposable, then $\T(A)=X$ for all $A\in 2^{X}$. Bellamy \cite{bellamy1} asked if any nonlocally connected continuum for which $\T$ is continuous had to be indecomposable,  later he proved this is not the case by showing that $\T$ is continuous  for the circle of pseudo-arcs  \cite{bellamy1.5}.    Mac\'ias \cite{macias1.5} generalized Bellamy's example  showing that  if  $X$ is one dimensional, has a terminal decomposition in pseudo-arcs and its decomposition space is locally connected, then $\T$ is continuous.

From the very beginning it was recognized that $\mathcal{D}=\{\T(\{x\}):x\in X\}$ plays a crucial role in the study topological properties of continua using the function $\T$.
Bellamy \cite[Theorem 5, p. 9]{bellamy1.5} shows that if there exist a locally connected continuum $Y$ and a monotone and open map $f\colon X\to Y$ such that $\T(A)=f^{-1}(f(A))$, for each $A\in 2^{X}$, then $\T$ is continuous and $\mathcal{D}$ is clearly a decomposition of $X$. Later Mac\'ias \cite{Macias0} proved the converse, if  $\T$ is continuous and  $\mathcal{D}$ is a decomposition  of $X$, then the  conditions in the hypothesis of  Bellamy's theorem hold. In this paper, we prove that the continuity of $\T$ suffices, that is to say, it  implies that $\mathcal{D}$ is a decomposition (see Theorem \ref{teorema3}).  This was the missing piece to answer several questions posed  by Bellamy \cite[p. 390]{houston} and  by  Mac\'ias in  \cite[Chapter 7]{Macias} (see Corollary \ref {corolario1}, Theorem \ref{teorema3}, Theorem \ref{teorema descomponible}, Corollary \ref{problema7.25}).

Mac\'ias   \cite{macias1} proved that if $X$ is a continuously irreducible continuum, the set function $\T$ is continuous. We  will show that in the class of decomposable irreducible continua, the continuously irreducible continua are the only one for which  the set function $\T$ is continuous (see Theorem \ref{irreducible}).

\section{Preliminaries}

If $X$ is a metric space, then given $A\subseteq X$ and $\epsilon>0$, the open ball about $A$ of radius $\epsilon$ is denoted by $B(A,\epsilon),$ the closure of $A$ is denoted by $\overline{A}$ and its interior is denoted by $A^{\circ}$.  Given a sequence $(x_n)_{n\in\N}$ in $X$ and $x\in X$, $x_n\rightarrow x$ means that $(x_n)_{n\in\N}$ converges to $x$. In this paper every map will be a continuous function. If $f\colon X\to Y$ is a map and $A\subseteq X$, $f|_A$ denotes the restriction of $f$ to $A$.

Given a topological space $X$, a decomposition of $X$ is a family $\mathcal{G}$ of nonempty and mutually disjoint subsets of $X$ such that $\bigcup\mathcal{G}=X$. A decomposition $\mathcal{G}$ of a topological space $X$ is said to be \textit{continuous} if the quotient map $q\colon X\to X/\mathcal{G}$ is both closed and open.

A \textit{compactum} is a compact metric space. A \textit{continuum} is a compact connected and nonempty metric space. Given a continuum $X$, we define $$2^{X}=\{A\subseteq X: A\text{ is closed and nonempty}\}.$$ For each $A,B\in 2^{X}$, we define $$H(A,B)=\inf\{r>0:A\subseteq N(B,r)\text{ and }B\subseteq N(A,r)\},$$ where $N(D,s)=\{x\in X: d(x,z)<s,\text{ for some }z\in D\}$, $D\in 2^{X}$ and $s>0$. $H$ is the Hausdorff metric on $2^{X}$. It is known that $2^{X}$ is a continuum.

Let $C_1,...,C_n$ be subsets of $X$. We define 
$$
\langle C_1,...,C_n\rangle=\{A\in 2^X: A\subseteq \bigcup_{i=1}^{n}C_i\ \text{ and }A\cap C_i\neq\emptyset, \text{ for each }i\in\{1,...,n\}\}.
$$
Then  $\{\langle U_1,...,U_n\rangle:U_i\text{ is open of }X\text{ for each }i\}$ is a base of the topology generated by the Hausdorff metric in $2^{X}$. 

\begin{definition}
Let $X$ be a compactum. Define $\T\colon\mathcal{P}(X)\to \mathcal{P}(X)$ by
\begin{multline*}
\T(A)=X\setminus\{x\in X:\text{there exists a subcontinuum }W\text{ of }X\\ \text{ such that } x\in W^{\circ}\subseteq W\subseteq X\setminus A\},
\end{multline*}
for each $A\in\mathcal{P}(X).$ The function $\T$ is called \textit{Jones' set function $\T$}.
\end{definition}

It is not difficult to show that $\T(A)$ is closed, for each $A\in\mathcal{P}(X)$. Thus, the restriction $\T|_{2^{X}}\colon 2^X\to 2^{X}$ is well defined. When we say that $\T$ is continuous, we refer to $\T|_{2^X}$. It is known that $\T$ is upper semicontinuous \cite[Theorem 3.2.1]{Macias}. We say that $\T$ is \textit{idempotent} provided that $\T^{2}(A)=\T(\T(A))=\T(A)$, for each $A\in \mathcal{P}(X)$. We say that $X$ is \textit{idempotent on closed sets} provided that $\T^{2}(A)=\T(A)$, for each $A\in 2^{X}$. Clearly, if $\T$ is  idempotent, then it is idempotent on closed sets.

\begin{example}
There exists a continuum $X$ such that $X$ is idempotent on closed sets, but it is not idempotent.
\end{example}

We define $[a,b]=\{x\in\R^{2}:x=at+b(1-t),\ t\in [0,1]\}$, for each $a,b\in\R^{2}$. Let $X=\bigcup_{i=0}^{\infty}[v,u_i]$, where $v=(0,1)$, $u_0=(0,0)$ and $u_n=(\frac{1}{n},0)$, for each $n\in\N$. $X$ is known as the harmonic fan. 
Let $U=[v,u_1]\setminus \{v\}\subseteq X$. It is not difficult to see that $\T(U)=[v,u_1]$ and $\T^{2}(U)=[v,u_1]\cup [v,u_0]$. Hence, $\T^{2}(U)\neq \T(U)$ and $\T$ is not idempotent. Let $A\in 2^{X}$. It is easy to show that $$\T(A)=\begin{cases} A &\text{ if }A\cap [v,u_0]=\emptyset; \\ A\cup [w,u_0] &\text{ if } w\in A\cap [v,u_0] \text{ and }[v,w]\cap A=\{w\}.\end{cases}$$ Thus, we may verify that $\T^2(A)=\T(A)$, for each $A\in 2^X$; i. e., $\T$ is idempotent on closed sets.
\bigskip

A continuum $X$ is \textit{$\T-$additive} provided that for each pair, $A$ and $B$, of closed subsets of $X$, $\T(A\cup B)=\T(A)\cup\T(B)$. 
Furthermore, $X$ is \textit{$\T-$symmetric} if for each closed sets $A$ and $B$, we have that $A\cap\T(B)=\emptyset$ if and only if $\T(A)\cap B=\emptyset$. Finally, a continuum $X$ is \textit{point $\T-$symmetric} provided that for each $p,q\in X$, $p\notin \T(\{q\})$ if and only if $q\notin\T(\{p\})$.

\section{Main results}

In this section, we prove in Theorem \ref{teorema2} that if $\T$ is continuous, then $\mathcal{D}=\{\T(\{x\}):x\in X\}$ is a continuous decomposition of $X$ such that the quotient space $X/\mathcal{D}$ is locally connected. We show some interesting consequences of this result.

\begin{proposition}\label{semicontinuidad}
Let $X$ be a continuum and let $A_n, A, L\in 2^X$, for each $n\in\N$. If $A_n\rightarrow A$ and $\T(A_n)\rightarrow L$, then $L\subseteq \T(A)\subseteq \T(L)$.
\end{proposition}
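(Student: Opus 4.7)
The plan has three short steps, and uses only two standard facts: the upper semicontinuity of $\T$ (already cited in the preliminaries) and the monotonicity of $\T$ with respect to set inclusion.

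First I would establish the inclusion $L\subseteq \T(A)$ directly from upper semicontinuity. In Hausdorff-convergence terms, this says that if $y_n\in \T(A_n)$ and $y_n\to y$ whenever $A_n\to A$, then $y\in \T(A)$. Since $\T(A_n)\to L$ Hausdorff, every point $y\in L$ is a limit $y_n\to y$ of points $y_n\in \T(A_n)$, and applying the usc property with $A_n\to A$ gives $y\in \T(A)$.

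Next I would show $A\subseteq L$. This is the elementary monotonicity of Hausdorff limits: since $A_n\subseteq \T(A_n)$ (an immediate consequence of the definition of $\T$), any $a\in A$ is a limit $a_n\to a$ with $a_n\in A_n\subseteq \T(A_n)$, so $a\in \lim \T(A_n)=L$.

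Finally, for $\T(A)\subseteq \T(L)$ I would invoke monotonicity of $\T$: if $A\subseteq B$, then any subcontinuum $W$ with $x\in W^\circ\subseteq W\subseteq X\setminus B$ also satisfies $W\subseteq X\setminus A$, so $\T(A)\subseteq \T(B)$. Applying this with $B=L$ closes the argument.

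There is no real obstacle here; the only thing to be careful about is not confusing the two usc conditions (the one for $\T$ as a function between hyperspaces, and the one that says $\T(A)$ is a closed set). The three facts being used are: $\T$ is usc, $\T$ is monotone, and $A\subseteq \T(A)$. None of them requires any hypothesis beyond $X$ being a continuum.
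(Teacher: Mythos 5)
Your proposal is correct and follows the same three-step structure as the paper's own proof: $L\subseteq \T(A)$ from upper semicontinuity, $A\subseteq L$ from $A_n\subseteq \T(A_n)$ and Hausdorff convergence, and then $\T(A)\subseteq \T(L)$ by monotonicity of $\T$. The only difference is that the paper establishes the first inclusion directly from the definition of $\T$ (fixing $x\in L$ and a subcontinuum $W$ with $x\in W^{\circ}$, then extracting a convergent sequence $z_k\in W\cap A_{n_k}$), whereas you delegate it to the already-cited fact that $\T$ is upper semicontinuous, in its sequential form $\limsup \T(A_n)\subseteq \T(A)$; since that fact is quoted in the preliminaries, your shortcut is legitimate.
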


\begin{proof}
We show first that $L\subseteq \T(A)$. Let $x\in L$ and $W$ be a continuum such that $x\in W^{\circ}$.  Since  $\T(A_n)\rightarrow L$, there is $n_0$ such that $\T(A_n)\cap W^{\circ}\neq\emptyset$ for all $n\geq n_0$. Then, by the defintion of $\T$,  $W\cap A_n\neq\emptyset$ for all $n\geq n_0$. By compactness, we can find an increasing sequence of integer  $(n_k)_{k\in\N}$ and $z_k\in W\cap A_{n_k}$ such that $z_k\rightarrow z$ for some $z\in X$. Thus $z\in A\cap W$. Since $W$ was arbitrary, then $x\in \T(A)$.  We have shown that $L\subseteq \T(A)$.

Now we show that $\T(A)\subseteq \T(L)$.  Since $A_n\subseteq \T(A_n)$, for each $n\in\N$, $A_n\rightarrow A$ and  $\T(A_n)\rightarrow L$, we have that $A\subseteq L$. Therefore, $\T(A)\subseteq \T(L)$.
\end{proof}

In \cite[Theorem 3.2.8]{Macias}, it is proved that if $\T$ is continuous then $\T$ is idempotent. The following result characterizes the continuity of $\T$ in terms of  idempotence.

\begin{theorem}\label{continuous}
Let $X$ be a continuum. Then, $\T$ is continuous if and only of $\T$ is idempotent on closed sets and $\T(2^X)$ is closed in $2^X$.
\end{theorem}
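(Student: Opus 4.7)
The plan is to handle the two directions separately, exploiting the fact that $\T$ is already known to be upper semicontinuous and that Proposition \ref{semicontinuidad} gives a useful two-sided containment along convergent sequences.

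For the \emph{forward direction}, assume $\T$ is continuous. Then by \cite[Theorem 3.2.8]{Macias} (cited in the paragraph just before the statement) $\T$ is idempotent, hence in particular idempotent on closed sets. Moreover, $2^{X}$ is compact and $\T\colon 2^{X}\to 2^{X}$ is continuous, so $\T(2^{X})$ is compact and therefore closed in $2^{X}$. This direction is a one-liner.

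For the \emph{backward direction}, assume $\T$ is idempotent on closed sets and that $\T(2^{X})$ is closed. I would reduce continuity to a sequential statement: let $A_{n}\to A$ in $2^{X}$; the goal is $\T(A_{n})\to \T(A)$. By compactness of $2^{X}$ it suffices to show that every convergent subsequence of $(\T(A_{n}))$ has limit $\T(A)$. So pass to a subsequence and assume $\T(A_{n})\to L$ for some $L\in 2^{X}$. Proposition \ref{semicontinuidad} immediately gives
\[
L\subseteq \T(A)\subseteq \T(L).
\]
Now use the two hypotheses: each $\T(A_{n})$ lies in $\T(2^{X})$, which is closed, so $L\in \T(2^{X})$; write $L=\T(B)$ for some $B\in 2^{X}$. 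Idempotence on closed sets gives $\T(L)=\T(\T(B))=\T(B)=L$. The chain of inclusions collapses to $L=\T(A)$, which is what we wanted.

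The proof is essentially a sandwich argument, and the only subtle point, which I expect to be the genuine content rather than an obstacle, is the use of $\T(2^{X})$ being closed to guarantee that the limit $L$ is itself a value of $\T$, so that idempotence on closed sets can be invoked to close the sandwich. Without that closure hypothesis, one would only know $L\subseteq \T(A)\subseteq \T(L)$ with no handle on $\T(L)$, and the argument breaks; this explains why both hypotheses are needed.
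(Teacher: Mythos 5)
Your proposal is correct and follows essentially the same route as the paper: the forward direction via idempotence from \cite[Theorem 3.2.8]{Macias} and compactness of $2^{X}$, and the backward direction via passing to a convergent subsequence, applying Proposition \ref{semicontinuidad} to get $L\subseteq \T(A)\subseteq \T(L)$, and collapsing the sandwich by writing $L=\T(B)$ and using idempotence on closed sets. Your added remarks on why both hypotheses are needed are accurate but not a departure from the paper's argument.
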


\begin{proof}
Notice that if $\T$ is continuous, then $\T$ is idempotent, by \cite[Theorem 3.2.8]{Macias}, and clearly $\T(2^X)$ is closed in $2^X$.
Conversely, suppose $\T$ is idempotent on closed sets and $\T(2^X)$ is closed in $2^X$. We will show that $\T$ is continuous.
Suppose $A_n, A\in 2^X$,  $A_n\rightarrow A$. By passing to a subsequence we assume that $\T(A_n)\rightarrow L$ for some $L\in 2^X$.  By Proposition \ref{semicontinuidad}, $L\subseteq \T(A)\subseteq \T(L)$. Since $\T(2^X)$ is closed in $2^X$, there is $B\in 2^X$ such that $L=\T(B)$. But $\T(L)=\T(\T(B))=\T(B)=L$ as $\T$ is idempotent on closed sets. Hence $L=\T(A)$.

\end{proof}

We will need the following result of Mac\'ias.

\begin{lemma}
\label{lema1}  (\cite[Theorem 3.7]{Macias0})
Let $X$ be a continuum such that $\T$ is idempotent on closed sets. If $x\in X$, then there exists $x_0\in X$ such that $\T(\{x_0\})\subset \T(\{x\})$ and  $\T(\{z\})=\T(\{x_0\})$ for each $z\in \T(\{x_0\})$.
\end{lemma}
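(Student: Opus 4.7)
My plan is to apply Zorn's Lemma to the family
$$\mathcal{F}=\{\T(\{y\}):y\in\T(\{x\})\}$$
partially ordered by inclusion, and to take for $x_0$ any representative of a minimal element. Two preliminary observations make the end of the argument nearly automatic. First, idempotence on closed sets gives that whenever $y\in\T(\{x\})$, one has $\T(\{y\})\subseteq\T(\T(\{x\}))=\T(\{x\})$, so every member of $\mathcal{F}$ is contained in $\T(\{x\})$. Second, the same reasoning shows that $z\in\T(\{y\})$ forces $\T(\{z\})\subseteq\T(\{y\})$. Consequently, once a minimal $\T(\{x_0\})\in\mathcal{F}$ has been produced, the inclusion $\T(\{x_0\})\subseteq\T(\{x\})$ is immediate; and for any $z\in\T(\{x_0\})$ the element $\T(\{z\})$ lies in $\mathcal{F}$ and is contained in $\T(\{x_0\})$, so minimality yields $\T(\{z\})=\T(\{x_0\})$.

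The substantive step is verifying that every chain in $\mathcal{F}$ admits a lower bound in $\mathcal{F}$. Because $2^X$ is a separable metric space, every decreasing chain contains a countable cofinal subchain $\T(\{y_1\})\supseteq\T(\{y_2\})\supseteq\cdots$ whose intersection equals that of the full chain. Compactness of $X$ and of $2^X$ allows me to pass to a subsequence (still indexed by $n$) along which $y_n\to y$ in $X$ and $\T(\{y_n\})\to L$ in the Hausdorff metric; being a nested Hausdorff limit, $L=\bigcap_n\T(\{y_n\})$. Since $y_m\in\T(\{y_n\})$ for $m\geq n$ and each $\T(\{y_n\})$ is closed, the limit $y$ lies in every $\T(\{y_n\})$, hence in $L$.

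I would then combine the two available tools to upgrade $L$ to the $\T$-image of a single point. Applying Proposition \ref{semicontinuidad} to $A_n=\{y_n\}\to\{y\}$ gives $L\subseteq\T(\{y\})$. For the reverse inclusion, $y\in L\subseteq\T(\{y_n\})$ means $\{y\}\subseteq\T(\{y_n\})$, so idempotence yields $\T(\{y\})\subseteq\T(\T(\{y_n\}))=\T(\{y_n\})$ for every $n$; intersecting over $n$ gives $\T(\{y\})\subseteq L$. Hence $\T(\{y\})=L$, and since $y\in L\subseteq\T(\{x\})$, the set $\T(\{y\})$ is an honest lower bound in $\mathcal{F}$. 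Zorn's Lemma then supplies a minimal element $\T(\{x_0\})$, and the preliminary observations complete the proof.

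The main obstacle, in my view, is precisely this lower-bound construction: one must turn a Hausdorff limit of the sets $\T(\{y_n\})$ into a single set of the form $\T(\{y\})$, which is a priori blocked by the one-sided semicontinuity of $\T$. It is here that idempotence on closed sets does the real work; without it, Proposition \ref{semicontinuidad} leaves one stranded with $L\subseteq\T(\{y\})\subseteq\T(L)$ and no obvious way to collapse these inclusions to equalities.
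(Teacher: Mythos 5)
Your argument is correct. Note, however, that the paper does not prove this statement at all: it is imported verbatim as \cite[Theorem 3.7]{Macias0}, so there is no in-text proof to compare against; what you have written is a self-contained replacement for that citation. The structure is sound: the two monotonicity-plus-idempotence observations ($y\in\T(\{x\})$ implies $\T(\{y\})\subseteq\T(\T(\{x\}))=\T(\{x\})$, and likewise one level down) reduce everything to producing a minimal element of $\mathcal{F}=\{\T(\{y\}):y\in\T(\{x\})\}$, and your chain argument is the right way to get it. The one place where your wording is looser than the underlying fact is the claim that a decreasing chain has a countable \emph{cofinal} (coinitial) subchain: that is not literally guaranteed, but the weaker and standard fact that in a second countable space any family of closed sets admits a countable subfamily with the same intersection (Lindel\"of applied to the complements) is all you use, since for a chain $\mathcal{C}$ the set $L=\bigcap\mathcal{C}$ is automatically a lower bound for every member of $\mathcal{C}$. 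The key step --- showing $L=\T(\{y\})$ for the limit point $y$, by combining Proposition \ref{semicontinuidad} (giving $L\subseteq\T(\{y\})$) with $y\in\bigcap_n\T(\{y_n\})$ and idempotence (giving $\T(\{y\})\subseteq\T(\T(\{y_n\}))=\T(\{y_n\})$ for all $n$) --- is exactly where the hypothesis of idempotence on closed sets is needed, and you have identified that correctly. One cosmetic remark: the lemma's ``$\subset$'' must be read as ``$\subseteq$'' (as the paper's later uses confirm), since a minimal element may well be $\T(\{x\})$ itself.
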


\begin{theorem}\label{teorema1}
Let $X$ be a continuum. If $\T|_{F_1(X)}\colon F_1(X)\to 2^X$ is continuous and $\T$ is idempotent on closed sets, then $\mathcal{D}=\{\T(\{x\}):x\in X\}$ is a decomposition of $X$.
\end{theorem}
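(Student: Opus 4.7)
The plan is to reduce proving that $\mathcal{D}$ is a decomposition to pairwise disjointness, since each $\T(\{x\})$ automatically contains $x$ and the family already covers $X$. Explicitly, I aim to show that $y\in\T(\{x\})$ forces $\T(\{y\})=\T(\{x\})$. One inclusion is handed to me by the hypothesis of idempotence on closed sets: $\{y\}\subseteq\T(\{x\})$ together with monotonicity of $\T$ gives $\T(\{y\})\subseteq\T(\T(\{x\}))=\T(\{x\})$. For the reverse inclusion, I introduce the set of ``minimal'' points
\[
P=\{y\in X : \T(\{z\})=\T(\{y\})\text{ for every }z\in\T(\{y\})\}
\]
and observe that it suffices to show $P=X$: for once every $x$ is minimal, the condition $y\in\T(\{x\})$ automatically triggers $\T(\{y\})=\T(\{x\})$.

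Writing $\psi=\T|_{F_1(X)}$, the first step would be to verify that $P$ is closed in $X$. Take $y_n\in P$ with $y_n\to y$ and any $z\in\T(\{y\})$; by continuity of $\psi$ we have $\T(\{y_n\})\to\T(\{y\})$ in $2^X$, so I can pick $z_n\in\T(\{y_n\})$ with $z_n\to z$. Minimality of $y_n$ yields $\T(\{z_n\})=\T(\{y_n\})$, and passing to the limit on both sides via continuity of $\psi$ at $z$ and at $y$ forces $\T(\{z\})=\T(\{y\})$; hence $y\in P$. Lemma~\ref{lema1} furnishes, for every $x\in X$, a minimal point $x_0\in P$ with $\T(\{x_0\})\subseteq\T(\{x\})$, so $P$ is nonempty and in fact meets every $\T(\{x\})$.

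The main obstacle is the upgrade from ``$P$ is closed and meets every $\T(\{x\})$'' to ``$P=X$.'' Assume $x\notin P$; Lemma~\ref{lema1} then gives a minimal $x_0$ with $\T(\{x_0\})\subsetneq\T(\{x\})$, where the inclusion must be strict since otherwise $\T(\{x_0\})=\T(\{x\})$ would, via minimality of $x_0$, make $x$ minimal. My plan is to combine continuity of $\psi$ with the idempotence-derived identity $\T(\{x\})=\bigcup_{y\in\T(\{x\})}\T(\{y\})$, which follows from $\T(\T(\{x\}))=\T(\{x\})$ and $y\in\T(\{y\})$, to produce a sequence $y_n\in P$ converging to $x$; closedness of $P$ would then force $x\in P$, contradicting the assumption. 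The delicate point is exactly this approximation: Lemma~\ref{lema1} supplies minimal points somewhere inside $\T(\{x\})$ but not a priori near $x$, and overcoming this appears to require using continuity of $\psi$ at $x$ in tandem with the global structure imposed on $\T$ by idempotence.
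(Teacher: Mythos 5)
Your setup is sound and coincides with the paper's: the set you call $P$ is exactly the set $M=\{x\in X:\T(\{z\})=\T(\{x\})\ \text{for each } z\in\T(\{x\})\}$ used there, your reduction of the theorem to $P=X$ is correct (minimality of every point gives that the sets $\T(\{x\})$ are pairwise equal or disjoint), and your argument that $P$ is closed via continuity of $\T|_{F_1(X)}$ is the same as the paper's Claim \ref{claim1}. The inclusion $\T(\{y\})\subseteq\T(\{x\})$ for $y\in\T(\{x\})$ via monotonicity and idempotence is also fine.

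However, there is a genuine gap: the entire content of the theorem lies in the step $P=X$, and you do not prove it --- you only describe a plan and then explicitly concede that ``the delicate point is exactly this approximation.'' The plan itself is unlikely to close as stated: if $x\notin P$, then since $P$ is closed there is a whole neighborhood of $x$ missing $P$, so producing a sequence $y_n\in P$ with $y_n\to x$ \emph{is} the contradiction you are after, not a tool you can assume; and the identity $\T(\{x\})=\bigcup_{y\in\T(\{x\})}\T(\{y\})$ together with Lemma \ref{lema1} only locates minimal points somewhere inside $\T(\{x\})$, with no control on their distance to $x$. The paper's argument for $M=X$ is of a different nature: assuming $M\neq X$, it splits into cases according to whether $M$ is connected. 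If $M$ is disconnected, it writes $M=M_1\cup M_2$ and covers $X$ by the closed set $R$ of points whose $\T$-image meets both $M_1$ and $M_2$ and the two disjoint open sets $R_i$ of points whose $\T$-image misses $M_i$; connectedness of $X$ forces $R\neq\emptyset$ and a boundary point of $R$ yields a contradiction via upper semicontinuity and continuity of $\T|_{F_1(X)}$. If $M$ is connected, it builds a subcontinuum $K\supseteq\bigcup\{\T(\{z\}):z\in W\}\cup M$ with nonempty interior avoiding a point $y\notin M$, contradicting $\T(\{x_0\})\subseteq\T(\{y\})$ from Lemma \ref{lema1}. Some such global connectedness argument is needed; a purely local approximation argument at $x$ does not suffice, so as written your proposal does not establish the theorem.
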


\begin{proof}
Let $${M}=\{x\in X:\text{ for each }z\in\T(\{x\}),\ \T(\{z\})=\T(\{x\})\}.$$
Observe that $\T(\{x\})\subseteq {M}$, for each $x\in {M}$. Also, by Lemma \ref{lema1}, ${M}\neq\emptyset$ and $\T(\{x\})\cap M\neq\emptyset$, for each $x\in X$.

\begin{claim}\label{claim1}
$M$ is closed in $X$.
\end{claim}

To show the claim, let $(x_n)_{n=1}^{\infty}\subseteq M$ and  $x\in X$  be such that  $x_n\rightarrow x$.  We will prove that $x\in M$. Let $z\in\T(\{x\})$.  Since $\T|_{F_1(X)}$ is continuous, $\T(\{x_n\})\rightarrow \T(\{x\})$. Hence, there exists $z_n\in\T(\{x_n\})$, for each  $n\in\N$, such that  $z_n\rightarrow z$. Thus, $\T(\{z_n\})\rightarrow \T(\{z\})$, by the continuity of  $\T|_{F_1(X)}$. Observe that  $\T(\{z_n\})=\T(\{x_n\})$, for each $n\in\N$, as $x_n\in M$.  Therefore, $\T(\{z\})=\T(\{x\})$ and  $x\in M$. This proves  Claim \ref{claim1}.

\begin{claim}\label{claim2}
$M=X$
\end{claim}

To prove the claim we assume that $M\neq X$ to get a contradiction.  We consider two cases:

(1)  Suppose $M$ is disconnected. Let $M=M_1\cup M_2$, where  $M_1$ and  $M_2$ are disjoint nonempty compact sets. Let $U_1$ and  $U_2$ be open sets such that  $M_1\subseteq U_1$, $M_2\subseteq U_2$ and  $U_1\cap U_2=\emptyset$. For each $x\in M_1$, as   $\T(\{x\})$ is connected  and  $x\in \T(\{x\})$, then  $\T(\{x\})\subseteq M_1\subseteq U_1$. Since $\T$ is upper semicontinuous, for each $x\in M_1$, there is  an open set $V_x\subseteq U_1$ such that  $x\in V_x$ and  $\T(\{z\})\subseteq U_1$, for each  $z\in V_x$. Let  $V_1=\bigcup\{V_x:x\in M_1\}$. It is clear that  $V_1\subseteq U_1$,  $M_1\subseteq V_1$ and $\T(\{x\})\subseteq U_1$  for each  $x\in V_1$. Analogously, we can find an open set $V_2\subseteq U_2$ such that  $M_2\subseteq V_2$ and  $\T(\{z\})\subseteq U_2$  for each $z\in V_2$.  Consider the following sets:
\[
\begin{array}{lcl}
R & = & \{x\in X:\T(\{x\})\cap M_1\neq\emptyset \mbox{ and } \T(\{x\})\cap M_2\neq\emptyset\}\\
    & \cong & \T^{-1}|_{F_1(X)}(\langle X,M_1\rangle\cap\langle X,M_2\rangle),
    \\\\
R_i & = & \{x\in X:\T(\{x\})\subseteq X\setminus M_i\}\cong \T^{-1}|_{F_1(X)}(\langle X\setminus M_i\rangle), \mbox{for $i\in\{1,2\}$}.
\end{array}
\]
Notice that $R$ is a closed set in $X$ and $R_i$ is an open set in $X$ for $i\in\{1,2\}$.

Also, $V_2\subseteq R_1$, $V_1\subseteq R_2$ and  $(V_1\cup V_2)\cap R=\emptyset$. Then $R_i\neq\emptyset$, $i\in\{1,2\}$. By Lemma \ref{lema1}, if $x\in X$, then  $\T(\{x\})\cap M\neq\emptyset$. Hence, $X\subseteq R\cup R_1\cup R_2$ and thus  $X=R\cup R_1\cup R_2$. Notice that if $x\in R_1\cap R_2$, then  $\T(\{x\})\subseteq X\setminus (M_1\cup M_2)$, which contradicts  Lemma \ref{lema1}. Thereofore, $R_1\cap R_2=\emptyset$. Since $X$ is connected and  $R_1$ y $R_2$ are open,  then $R\neq\emptyset$.

There is $x\in R$ such that $x\in \overline{R_1\cup R_2}$, without lost of generality, we assume that there is  $(x_n)_{n=1}^{\infty}\subseteq R_2$ such that  $x_n\rightarrow x$. Since $\T(\{x\})\cap M_2\neq\emptyset$  and $M_2\subseteq V_2$,  we have that $\T(\{x\})\in\langle X,V_2\rangle$. By the continuity of $\T$, there is  $k\in\N$  such that $\T(\{x_k\})\in \langle X,V_2\rangle$. Hence there is  $z\in \T(\{x_k\})\cap V_2$. By the choice  of $V_2$, $\T(\{z\})\subseteq U_2$.  By \cite[Theorem 3.7]{Macias0},  $\T(\{z\})\cap M\neq\emptyset$. As  $M_1\cap U_2=\emptyset$, then  $\T(\{z\})\cap M_2\neq\emptyset$.  Since $\T(\{z\})\subseteq \T(\T(\{x_k\}))=\T(\{x_k\})$, then $\T(\{x_k\})\cap M_2\neq\emptyset$ which contradicts  that  $x_k\in R_2$.

\bigskip

(2) Suppose $M$ is connected.  Let  $y\in X\setminus M$. By  Lemma \ref{lema1}, there is $x_0\in M$ such that  $\T(\{x_0\})\subseteq \T(\{y\})$. Let  $U$ and $V$ be open sets of  $X$  such that $M\subseteq U$, $y\in V$ and $U\cap V=\emptyset$. Since $\T$ is upper semicontinuous,  there is an open set $W$ of  $X$ such that  $x_0\in W$ and $\T(\{z\})\subseteq U$, for each  $z\in W$. Let $K=\overline{\bigcup\{\T(\{z\}):z\in W\}}\cup M$. Since $\T(\{z\})\cap M\neq\emptyset$, for each $z\in W$, and  $M$ is connected,  we have that  $\bigcup\{\T(\{z\}):z\in W\}\cup M$ is connected. Thus,  $K$ is a subcontinuum of $X$. It is clear that $K\subseteq \overline{U}\subseteq X\setminus V$ and  $x_0\in W\subseteq K$. Therefore, $K\cap \{y\}=\emptyset$  and $x_0\notin \T(\{y\})$, which contradicts that $\T(\{x_0\})\subseteq \T(\{y\})$.

\bigskip

This completes the proof that $X=M$, which clearly implies  that  $\{\T(\{x\}):x\in X\}$ is a decomposition of  $X$.
\end{proof}

\begin{theorem}\label{teorema2}
Let $X$ be a continuum. If $\T\colon 2^{X}\to 2^X$ is continuous, then $\mathcal{D}=\{\T(\{x\}):x\in X\}$ is a continuous decomposition such that $X/\mathcal{D}$ is locally connected.
\end{theorem}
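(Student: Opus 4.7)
The plan is to combine earlier results in the excerpt with Mac\'ias's converse of Bellamy's theorem. Three things need to be checked: that $\mathcal{D}=\{\T(\{x\}):x\in X\}$ is a decomposition, that this decomposition is continuous, and that $X/\mathcal{D}$ is locally connected.

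For the first point, continuity of $\T\colon 2^X\to 2^X$ restricts to continuity of $\T|_{F_1(X)}$, and by Theorem \ref{continuous} it also forces $\T$ to be idempotent on closed sets. Both hypotheses of Theorem \ref{teorema1} hold, so $\mathcal{D}$ is a decomposition of $X$.

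For the second point, let $q\colon X\to X/\mathcal{D}$ be the quotient map and define $g\colon X\to 2^X$ by $g(x)=\T(\{x\})$; the map $g$ is continuous. For each open $U\subseteq X$ the saturation $q^{-1}(q(U))=\{y\in X:g(y)\cap U\neq\emptyset\}=g^{-1}(\langle X,U\rangle)$ is open, so $q$ is open. For each closed $C\subseteq X$ the saturation $q^{-1}(q(C))=\bigcup_{x\in C}g(x)$ is the union of the compact family $g(C)\subseteq 2^X$, hence closed, so $q$ is closed. Thus $\mathcal{D}$ is a continuous decomposition.

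For the third point, since $\T$ is continuous and $\mathcal{D}$ is now known to be a decomposition, Mac\'ias's result from \cite{Macias0} (recalled in the introduction) furnishes a locally connected continuum $Y$ and a monotone open surjection $f\colon X\to Y$ with $\T(A)=f^{-1}(f(A))$ for each $A\in 2^X$. The fibers of $f$ coincide with the elements of $\mathcal{D}$, so $f$ induces a homeomorphism $X/\mathcal{D}\to Y$, and local connectedness transfers. The main obstacle, namely the passage from continuity of $\T$ to the decomposition property of $\mathcal{D}$, has already been absorbed into Theorem \ref{teorema1}; once that is secured, what remains in Theorem \ref{teorema2} is a routine translation of continuity of $g$ into the open/closed behavior of $q$, together with an appeal to Mac\'ias's converse for local connectedness.
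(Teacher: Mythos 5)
Your proposal is correct and follows essentially the same route as the paper: derive idempotence on closed sets from continuity, invoke Theorem \ref{teorema1} to get that $\mathcal{D}$ is a decomposition, verify that the quotient map is open and closed, and appeal to Mac\'ias's results in \cite{Macias0} for local connectedness of $X/\mathcal{D}$. The only differences are that you spell out the open/closed verification for $q$ (which the paper dismisses as ``clear'') and you reach local connectedness via the converse-of-Bellamy statement from \cite{Macias0} rather than citing its Theorem 3.4 directly; both are sound.
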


\begin{proof}

Since $\T$ is continuous, $\T$ is idempotent, by \cite[Theorem 3.2.8]{Macias}. Thus, $\mathcal{D}$ is a decomposition, by Theorem \ref{teorema1}. It is clear that since $\T$ is continuous, $\T|_{F_1(X)}$ is continuous and $\mathcal{D}$ is a continuous decomposition. Finally, $X/\mathcal{D}$ is locally connected, by \cite[Theorem 3.4]{Macias0}.
\end{proof}

If $\mathcal{D}=\{\T(\{x\}):x\in X\}$ is a decomposition of $X$, then it is easy to see that $p\notin \T(\{q\})$ if and only if $q\notin \T(\{p\})$, for $p,q\in X$, $p\neq q$; i.e., $X$ is point $\T-$symmetric. Thus, by Theorem \ref{teorema2} and  \cite[Corollary 3.2.15]{Macias}, we have the following result which give us a positive answer to Question 7.2.1 of \cite{Macias}.

\begin{corollary}\label{corolario1}
Let $X$ be a continuum. If $\T\colon 2^{X}\to 2^{X}$ is continuous, then $X$ is both $\T-$symmetric and $\T-$additive.
\end{corollary}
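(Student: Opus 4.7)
The plan is to reduce the corollary to two already-available results. By Theorem \ref{teorema2}, the continuity of $\T$ gives that $\mathcal{D}=\{\T(\{x\}):x\in X\}$ is a (continuous) decomposition of $X$. Once this is in hand, the proof is essentially a two-step chain: first derive point $\T$-symmetry from the fact that $\mathcal{D}$ is a decomposition, and then invoke \cite[Corollary 3.2.15]{Macias} to upgrade point $\T$-symmetry (together with the continuity of $\T$ that we already assume) to full $\T$-symmetry and $\T$-additivity.

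The first step is the parenthetical remark preceding the corollary; I would record it explicitly. Fix $p,q\in X$ with $p\neq q$ and suppose $p\notin \T(\{q\})$. Since $q\in\T(\{q\})$ and $p\in\T(\{p\})$ always hold, the elements $\T(\{p\})$ and $\T(\{q\})$ of $\mathcal{D}$ are distinct; because $\mathcal{D}$ partitions $X$ into mutually disjoint pieces, this forces $\T(\{p\})\cap\T(\{q\})=\emptyset$, and in particular $q\notin\T(\{p\})$. By symmetry of the argument the converse also holds, which is exactly the definition of point $\T$-symmetry.

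For the second step I would simply cite \cite[Corollary 3.2.15]{Macias}, whose hypotheses (continuity of $\T$ plus point $\T$-symmetry) are now both verified. This yields simultaneously that $X$ is $\T$-symmetric and $\T$-additive, completing the proof.

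The only potential obstacle is a bookkeeping one: making sure that the hypotheses of the cited Mac\'ias corollary coincide with the data we have on the table, rather than requiring an additional property we have not yet established. Since Theorem \ref{teorema2} delivers even more than continuity of $\T$ (it also gives a continuous decomposition and local connectedness of $X/\mathcal{D}$), any reasonable set of hypotheses in \cite[Corollary 3.2.15]{Macias} should be available, so I do not expect any real difficulty here.
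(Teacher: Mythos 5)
Your proposal is correct and follows exactly the paper's route: it derives point $\T$-symmetry from the fact that $\mathcal{D}=\{\T(\{x\}):x\in X\}$ is a decomposition (Theorem \ref{teorema2}), noting that $p\in\T(\{p\})$ and $q\in\T(\{q\})$ force disjointness of distinct classes, and then applies \cite[Corollary 3.2.15]{Macias}. The paper records the first step in the remark immediately preceding the corollary, so there is nothing to add.
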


\begin{theorem}\label{locally connected}
Let $X$ be a continuum. Then, $X$ is locally connected if and only if $\T$ is onto and idempotent on closed sets.
\end{theorem}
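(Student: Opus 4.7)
The plan is to reduce the two hypotheses to the single statement that $\T|_{2^X}$ is the identity map, and then read local connectedness directly off the definition of $\T$.

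For the forward direction, the introduction records that if $X$ is locally connected then $\T(A)=A$ for every $A\in 2^X$. Consequently $\T|_{2^X}$ is the identity, which is trivially both onto and idempotent on closed sets.

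For the converse, I would first observe that the two hypotheses together force $\T(A)=A$ for every $A\in 2^X$. Indeed, given $A\in 2^X$, surjectivity of $\T$ supplies some $B\in 2^X$ with $\T(B)=A$, and then idempotence on closed sets yields
\[
\T(A)=\T(\T(B))=\T(B)=A.
\]
Hence $\T|_{2^X}$ is the identity.

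To finish, fix $x\in X$ and an open neighborhood $U$ of $x$. If $U=X$ there is nothing to prove, since $X$ is itself a connected neighborhood of $x$. Otherwise $A=X\setminus U$ is a nonempty closed subset of $X$, so $A\in 2^X$ and $\T(A)=A$. Because $x\notin A=\T(A)$, the very definition of $\T$ provides a subcontinuum $W$ of $X$ with $x\in W^{\circ}\subseteq W\subseteq X\setminus A=U$. Thus $x$ has arbitrarily small continuum neighborhoods, i.e., $X$ is connected im kleinen at every point, and for (metric) continua this is classically equivalent to local connectedness.

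The argument is a direct unpacking of the definitions, so there is no genuine obstacle; the only ingredient borrowed from outside the excerpt is the standard equivalence between pointwise connectedness im kleinen and local connectedness.
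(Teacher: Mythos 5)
Your proposal is correct, and the converse direction takes a genuinely different and more elementary route than the paper. Both arguments begin with the same observation that surjectivity plus idempotence on closed sets force $\T(A)=\T(\T(B))=\T(B)=A$, but they diverge from there. The paper only needs this for singletons: it invokes Theorem \ref{continuous} (ontoness makes $\T(2^X)=2^X$ closed, so $\T$ is continuous), concludes $\T(\{x\})=\{x\}$ for all $x$, and then applies the decomposition machinery of Theorem \ref{teorema2} to get that $X\cong X/\mathcal{D}$ is locally connected. You instead show $\T$ is the identity on all of $2^X$ and unwind the definition directly: for $x\in U$ open with $U\neq X$, the closed set $A=X\setminus U$ satisfies $x\notin A=\T(A)$, which hands you a subcontinuum $W$ with $x\in W^{\circ}\subseteq W\subseteq U$; connectedness im kleinen at every point then gives local connectedness by the classical theorem. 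Your route is self-contained modulo that one classical fact and avoids the paper's main theorems entirely, which is a real simplification; the paper's route has the expository virtue of exhibiting the result as a corollary of its new decomposition machinery. One small point worth noting: the classical equivalence you cite must be the \emph{global} one (connected im kleinen at \emph{every} point implies locally connected), since the pointwise versions are not equivalent; you state it correctly, but it is the step a referee would want referenced.
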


\begin{proof}
If $X$ is locally connected then $\T(K)=K$ for all $K\in 2^X$ \cite[Corollary 3.1.25]{Macias}. Thus, $\T$ is onto and idempotent on closed sets. Suppose $\T$ is onto and idempotent. By Theorem \ref{continuous}, $\T$ is continuous. Since $\T$ is onto, then necessarily $\T(\{x\})=\{x\}$ for all $x\in X$. Therefore by Theorem \ref{teorema2}, $X$ is locally connected  as the decomposition $\mathcal{D}$ is trivial.
\end{proof}

\begin{question}
Suppose $\T$ is onto, is $X$ locally connected? or equivalently, is $\T$ idempotent on closed sets?
\end{question}

The following result gives a positive answer to Problem 162 of the Houston Problem Book \cite[p. 390]{houston} and  characterizes all continua $X$ for which $\T$ is continuous.

\begin{theorem}\label{teorema3}
Let $X$ be a continuum. Then, $\T$ is continuous if and only if there exist a locally connected continuum $Y$ and a monotone and open map $f\colon X\to Y$ such that $\T(A)=f^{-1}(f(A))$, for each $A\in 2^{X}$.
\end{theorem}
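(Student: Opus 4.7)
The plan is to treat the two implications separately. The ``if'' direction is precisely Bellamy's theorem \cite[Theorem 5]{bellamy1.5} recalled in the introduction, so the entire content of the statement lies in the forward implication, and that is where I would concentrate the work.

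Assume $\T$ is continuous. By Theorem \ref{teorema2}, the collection $\mathcal{D}=\{\T(\{x\}):x\in X\}$ is a continuous decomposition of $X$ whose quotient space $Y=X/\mathcal{D}$ is a locally connected continuum; I would take $f\colon X\to Y$ to be the quotient map. Each fiber $f^{-1}(f(x))=\T(\{x\})$ is a subcontinuum of $X$, so $f$ is monotone, and the continuity of the decomposition is exactly the assertion that $f$ is both open and closed. It then remains to verify that $\T(A)=f^{-1}(f(A))$ for every $A\in 2^X$. Since $\mathcal{D}$ is a decomposition we have $f^{-1}(f(A))=\bigcup\{\T(\{x\}):x\in A\}$, and the monotonicity of $\T$ gives each $\T(\{x\})\subseteq\T(A)$; this delivers the inclusion $f^{-1}(f(A))\subseteq\T(A)$ for free.

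The main step is the reverse inclusion, which I would argue by contradiction. Suppose $y\in\T(A)\setminus f^{-1}(f(A))$. Since $f$ is closed, $f(A)$ is closed in $Y$ and misses $f(y)$. Using local connectedness of the compact metric space $Y$, pick an open connected neighborhood $V$ of $f(y)$ with $\overline{V}\cap f(A)=\emptyset$; then $K=f^{-1}(\overline{V})$ is a subcontinuum of $X$ (the preimage of a continuum under a monotone map between compacta), $y\in f^{-1}(V)\subseteq K^{\circ}$, and $K\cap A=\emptyset$, contradicting $y\in\T(A)$. I expect this to be the only delicate point: everything hinges on the local connectedness of $X/\mathcal{D}$ supplied by Theorem \ref{teorema2}, which is exactly what lets Jones' definition of $\T$ be evaluated against the saturated subcontinuum $K$ to produce the contradiction.
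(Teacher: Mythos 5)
Your proposal is correct, and its overall architecture coincides with the paper's: both directions hinge on Theorem \ref{teorema2} to produce the decomposition $\mathcal{D}=\{\T(\{x\}):x\in X\}$ and take $f$ to be the quotient map $q\colon X\to X/\mathcal{D}$, and both dispose of the ``if'' direction by citation (you to Bellamy's theorem \cite[Theorem 5]{bellamy1.5}, the paper to Theorems 1.8.22 and 1.8.24 of \cite{Macias}; these amount to the same thing). Where you genuinely diverge is in establishing the identity $\T(A)=q^{-1}(q(A))$. The paper gets the hard inclusion $\T(A)\subseteq\bigcup_{a\in A}\T(\{a\})$ by first invoking Corollary \ref{corolario1} ($X$ is $\T$-additive) and then \cite[Corollary 3.1.46]{Macias}. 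You instead prove it by hand: given $y\in\T(A)$ with $q(y)\notin q(A)$, local connectedness of $X/\mathcal{D}$ yields an open connected $V\ni q(y)$ with $\overline{V}\cap q(A)=\emptyset$, and $K=q^{-1}(\overline{V})$ is a saturated subcontinuum (monotone closed surjections pull connected sets back to connected sets) with $y\in q^{-1}(V)\subseteq K^{\circ}$ and $K\cap A=\emptyset$, contradicting $y\in\T(A)$. This argument is sound and buys self-containment: it bypasses $\T$-additivity and the external corollary entirely, at the cost of redoing in this special case what those results package in general. The paper's route is shorter on the page but leans on more machinery; yours makes visible exactly where local connectedness of the quotient is used.
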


\begin{proof}
Suppose that $\T$ is continuous. By Theorem \ref{teorema2}, $Y=X/\mathcal{D}$ is locally connected and $q\colon X\to Y$, the quotient map, is monotone and open. It is clear that $\T(\{x\})=q^{-1}(q(x))$, for each $x\in X$. Note that $X$ is $\T-$additive, by Corollary \ref{corolario1}. By \cite[Corollary 3.1.46]{Macias}, $\T(A)=\bigcup_{a\in A}\T(\{a\})$, for each $A\in 2^{X}$. Therefore, $\T(A)=\bigcup_{a\in A}\T(\{a\})=\bigcup_{a\in A}q^{-1}(q(a))=q^{-1}(q(A))$.

Conversely, suppose $f\colon X\to Y$ is a map as in the hypothesis.  Since $\T(A)=f^{-1}(f(A))$, for each $A\in 2^{X}$, and $f$ is open and surjective, then $\T$ is continuous by Theorems 1.8.22  and 1.8.24 of \cite{Macias}.
\end{proof}

With the next two results, we answer Questions 7.2.3 and 7.25 of \cite{Macias}, in positive.

\begin{theorem}\label{teorema descomponible}
Let $X$ be a continuum such that $\T\colon 2^X\to 2^X$ is continuous. If $\T(\{p\})$ has nonempty interior for some $p\in X$, then $X$ is indecomposable.
\end{theorem}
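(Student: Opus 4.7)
The plan is to exploit Theorem \ref{teorema3} to turn the hypothesis on $\T$ into a statement about the quotient $Y = X/\mathcal{D}$, and then observe that a nonempty interior for $\T(\{p\})$ forces $Y$ to collapse to a point.

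First, I would invoke Theorem \ref{teorema3} to obtain the locally connected continuum $Y$ and the monotone open surjection $q\colon X\to Y$ satisfying $\T(A) = q^{-1}(q(A))$ for every $A\in 2^X$. In particular, $\T(\{p\}) = q^{-1}(q(p))$ is a fiber of $q$.

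Next, let $U = \T(\{p\})^{\circ}$; by hypothesis $U$ is nonempty. Since $q$ is an open map, $q(U)$ is open in $Y$, but $q(U) \subseteq q(\T(\{p\})) = \{q(p)\}$, so $\{q(p)\}$ is open in $Y$. Because $Y$ is Hausdorff, $\{q(p)\}$ is also closed, hence clopen in the connected space $Y$; this forces $Y = \{q(p)\}$, that is, $Y$ is a single point. Consequently $q$ is constant and $\T(\{x\}) = q^{-1}(q(x)) = X$ for every $x\in X$.

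Finally, I would unwind the definition of $\T$: the equality $\T(\{x\}) = X$ for all $x$ says that whenever $K$ is a subcontinuum of $X$ and some $y\in K^{\circ}$, every $x\in X$ must satisfy $K\cap\{x\}\neq\emptyset$, i.e., $K = X$. Thus every proper subcontinuum of $X$ has empty interior, which is precisely the statement that $X$ is indecomposable. The proof is essentially immediate once Theorem \ref{teorema3} is applied; the only subtle point, and hence the only place one needs to be careful, is the standard observation that a connected Hausdorff space with an isolated point must be a singleton.
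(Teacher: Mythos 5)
Your proof is correct, and it takes a genuinely different route from the paper's. The paper argues directly from Theorem \ref{teorema1} and the continuity of $\T$: assuming $\T(\{p\})\neq X$, it picks a boundary point $x_0$ of $\T(\{p\})$ approached by points $x_n\notin\T(\{p\})$, uses the decomposition property to conclude $\T(\{x_n\})\cap\T(\{x_0\})=\emptyset$ while $\T(\{x_0\})=\T(\{p\})$ contains a nonempty open set $U$, and derives a contradiction with continuity via the basic open set $\langle X,U\rangle$; it then cites the fact that $\T(\{p\})=X$ forces indecomposability. You instead invoke the full strength of Theorem \ref{teorema3} (itself built on Theorems \ref{teorema1}, \ref{teorema2} and Corollary \ref{corolario1}) and exploit openness of the quotient map $q$: a fiber with nonempty interior has open image, so the singleton $\{q(p)\}$ is clopen in the connected space $Y$, collapsing $Y$ to a point. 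Your argument is cleaner and shorter, at the cost of using heavier machinery; the paper's is more elementary relative to what has been established, needing only that $\mathcal{D}$ is a decomposition plus continuity. There is no circularity in your approach, since Theorem \ref{teorema3} is proved without reference to this theorem. Two small remarks: your final unwinding only needs the easy direction of the standard equivalence (a decomposable continuum always has a proper subcontinuum with nonempty interior, so if every proper subcontinuum has empty interior then $X$ is indecomposable), which is exactly what you use; and you could have shortened the ending by citing, as the paper does, that $\T(\{x\})=X$ for every $x\in X$ already characterizes indecomposability.
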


\begin{proof}
Since $\T$ is continuous, $\mathcal{D}=\{\T(\{x\}):x\in X\}$ is a decomposition of $X$, by Theorem \ref{teorema1}. Notice that if $\T(\{p\})=X$, then $\T(\{x\})=X$, for each $x\in X$, and $X$ is indecomposable \cite[Theorem 3.1.34]{Macias}. Thus, suppose that $\T(\{p\})\neq X$. Since $\T(\{p\})$ is closed, there exists $(x_n)_{n=1}^{\infty}\subseteq X\setminus \T(\{p\})$ such that $x_n\rightarrow x_0$, $x_0\in \T(\{p\})$. Since $\mathcal{D}$ is a decomposition (Theorem \ref{teorema1}), $\T(\{x_0\})=\T(\{p\})$ and $\T(\{x_n\})\cap \T(\{x_0\})=\emptyset$, for each $n\in\N$. Let $U$ be a nonempty open subset of $X$ such that $U\subseteq \T(\{x_0\})$. It is clear that $\T(\{x_0\})\in\langle X,U\rangle$ and, since $\T(\{x_n\})\cap \T(\{x_0\})=\emptyset$, $\T(\{x_n\})\cap U=\emptyset$, for each $n\in\N$. Thus, $\T(\{x_n\})\notin\langle X,U\rangle$ contrary to our assumption that $\T$ is continuous and $x_n\rightarrow x_0$. Therefore, $\T(\{p\})=X$ and $X$ is indecomposable.
\end{proof}

\begin{corollary}
\label{problema7.25}
Let $X$ be a continuum such that $\T\colon 2^X\to 2^X$ is continuous. Then, $X$ is decomposable if and only if $\T(\{p\})$ has empty interior, for each $p\in X$.
\end{corollary}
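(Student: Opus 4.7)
The plan is to derive the corollary directly from Theorem \ref{teorema descomponible} together with the standard fact, recalled in the introduction, that if $X$ is indecomposable then $\T(A)=X$ for every nonempty $A\in 2^X$.

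For the forward implication, suppose $X$ is decomposable and $\T$ is continuous. Since Theorem \ref{teorema descomponible} states that, under continuity of $\T$, having $\T(\{p\})$ with nonempty interior for some $p\in X$ forces $X$ to be indecomposable, its contrapositive gives exactly what we want: decomposability of $X$ implies that $\T(\{p\})$ has empty interior for every $p\in X$.

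For the converse, assume that $\T(\{p\})$ has empty interior for every $p\in X$, and suppose toward a contradiction that $X$ is not decomposable, i.e., $X$ is indecomposable. Then by fact (b) of the introduction, $\T(A)=X$ for every $A\in 2^X$; in particular $\T(\{p\})=X$ for every $p\in X$. But $X$ itself has nonempty interior in $X$, contradicting the hypothesis. Hence $X$ is decomposable.

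There is essentially no obstacle here: the content of the corollary is entirely packaged in Theorem \ref{teorema descomponible} and the elementary behavior of $\T$ on indecomposable continua. The only thing to be careful about is to write both implications as contrapositives so that one never needs to assume $X$ is decomposable and produce the interior-related statement directly; the symmetric use of the indecomposable case handles the reverse direction cleanly.
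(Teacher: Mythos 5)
Your proof is correct. The forward implication is exactly the paper's: the contrapositive of Theorem \ref{teorema descomponible}. For the converse, however, you take a genuinely different and more elementary route. The paper argues that if every $\T(\{p\})$ has empty interior, then the quotient $X/\mathcal{D}$ of Theorem \ref{teorema2} is a \emph{nondegenerate} locally connected continuum, hence decomposable by a theorem of Kuratowski, and then pulls decomposability back along the monotone quotient map. You instead observe that if $X$ were indecomposable, then $\T(\{p\})=X$ for every $p$ (fact (b) of the introduction, which holds because every proper subcontinuum of an indecomposable continuum has empty interior), and $X$ trivially has nonempty interior in itself --- contradicting the hypothesis. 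Your argument is shorter, avoids the decomposition machinery and the Kuratowski citation entirely, and in fact does not use the continuity of $\T$ in that direction at all; what it reveals is that the converse implication is true for arbitrary continua, and only the forward implication genuinely depends on the continuity hypothesis (via Theorem \ref{teorema descomponible}). The paper's version, by contrast, keeps the statement tied to the structure of the decomposition $\mathcal{D}$, which fits the narrative of the section but is not logically necessary here.
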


\begin{proof}
If $X$ is decomposable, then $\T(\{p\})$ has empty interior, for each $p\in X$, by Theorem \ref{teorema descomponible}. Conversely, suppose that $\T(\{p\})$ has empty interior, for each $p\in X$. Thus, $X/\mathcal{D}$ is a nondegenerate locally connected, where $\mathcal{D}=\{\T(\{x\}):x\in X\}$, by Theorem \ref{teorema2}. Therefore, $X/\mathcal{D}$ is decomposable \cite[Theorem 2, p. 207]{KW2} and, since the quotient map $q\colon X\to X/\mathcal{D}$ is monotone, we have that $X$ is decomposable.
\end{proof}

A continuum $X$ is \textit{irreducible} if there are two points $p$ and $q$ of $X$ such that no proper subcontinuum of $X$ contains both $p$ and $q$. A continuum $X$ is of \textit{type $\lambda$} provided that $X$ is irreducible and each indecomposable subcontinuum of $X$ has empty interior. It is known that a continuum $X$ is of type $\lambda$ if and only if admits a finest monotone upper semicontinuous decomposition $\mathcal{G}$ such that each element of $\mathcal{G}$ is nowhere dense and $X/\mathcal{G}$ is an arc \cite[Theorem 10, p. 15]{Thomas}. We say that a continuum $X$ is \textit{continuously irreducible} provided that $X$ is of type $\lambda$ and its decomposition $\mathcal{G}$ is continuous. We characterize continuously irreducible continua,  compare the following result with \cite[Theorem 3.2]{macias1}.

\begin{theorem}\label{irreducible}
Let $X$ be an irreducible continuum. Then, $X$ is either indecomposable or continuously irreducible continuum if and only if the set function $\T$ is continuous.
\end{theorem}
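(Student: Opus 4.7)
The ``if'' direction has two subcases. If $X$ is indecomposable, then $\T(A) = X$ for every $A \in 2^X$, so $\T$ is constant and trivially continuous. If $X$ is continuously irreducible, the continuity of $\T$ is Mac\'ias's result \cite[Theorem 3.2]{macias1}, cited in the introduction.

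For the ``only if'' direction, assume $X$ is irreducible between $p$ and $q$ and $\T$ is continuous. If $X$ is indecomposable we are done, so suppose $X$ is decomposable; I aim to show $X$ is continuously irreducible. By Corollary \ref{problema7.25} each $\T(\{x\})$ is nowhere dense, and by Theorem \ref{teorema2} the family $\mathcal{D} = \{\T(\{x\}) : x \in X\}$ is a continuous decomposition with $X/\mathcal{D}$ locally connected. Letting $q\colon X \to X/\mathcal{D}$ be the quotient, the monotonicity of $q$ transfers irreducibility: any subcontinuum of $X/\mathcal{D}$ containing $q(p)$ and $q(q)$ pulls back to a saturated subcontinuum of $X$ containing $p$ and $q$, hence to all of $X$. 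Thus $X/\mathcal{D}$ is a nondegenerate locally connected irreducible continuum, hence an arc.

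The plan is to show that $\mathcal{D}$ is the finest monotone upper semicontinuous decomposition of $X$ whose quotient is an arc; then Thomas's theorem \cite[Theorem 10, p. 15]{Thomas} yields that $X$ is of type $\lambda$ with finest decomposition $\mathcal{G} = \mathcal{D}$, which is continuous, so $X$ is continuously irreducible. Let $\mathcal{H}$ be any monotone upper semicontinuous decomposition of $X$ whose quotient is an arc, parametrized as $[0,1]$ via $q_\mathcal{H}$. For $u \in (0,1)$ set $P_u = q_\mathcal{H}^{-1}([0,u))$. Since $[0,u) = \bigcup_n [0,u-1/n]$ and each $q_\mathcal{H}^{-1}([0,u-1/n])$ is a subcontinuum of $X$ (preimage of a closed subarc under the monotone map $q_\mathcal{H}$), the set $P_u$ is open and connected, so $\overline{P_u}$ is a subcontinuum of $X$ contained in $q_\mathcal{H}^{-1}([0,u])$. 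For $y,x \in X$ with $q_\mathcal{H}(y) < q_\mathcal{H}(x)$, choosing $u$ strictly between these values gives $y \in P_u \subseteq (\overline{P_u})^{\circ}$ and $x \notin \overline{P_u}$; so $\overline{P_u}$ witnesses $y \notin \T(\{x\})$. A symmetric construction with $Q_u = q_\mathcal{H}^{-1}((u,1])$ handles the reversed inequality. Hence $\T(\{x\}) \subseteq q_\mathcal{H}^{-1}(q_\mathcal{H}(x))$ for every $x$, meaning $\mathcal{D}$ refines $\mathcal{H}$, as required.

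The main obstacle is producing, for each pair $x,y$ in distinct $\mathcal{H}$-classes, a subcontinuum witnessing $y \notin \T(\{x\})$; the level set $\overline{P_u}$ supplies this, and crucially the construction needs only the monotonicity of $q_\mathcal{H}$ (not its continuity). This is what allows $\mathcal{D}$ to refine every candidate for a finer decomposition, and hence be the finest.
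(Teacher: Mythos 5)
Your proof is correct, and the second half takes a genuinely different route from the paper's. Both arguments begin the same way: from the continuity of $\T$ you get (via Theorem \ref{teorema2}/Theorem \ref{teorema3}) the continuous monotone decomposition $\mathcal{D}$ with locally connected quotient, the quotient inherits irreducibility under the monotone quotient map, and a nondegenerate locally connected irreducible continuum is an arc (nondegeneracy following from Corollary \ref{problema7.25}). At that point the paper observes that each $\T(\{x\})$ has empty interior, deduces that every indecomposable subcontinuum of $X$ has empty interior, so $X$ is of type $\lambda$, and then simply cites Mac\'ias's \cite[Theorem 3.2]{macias1} a second time to conclude that a type-$\lambda$ continuum with $\T$ continuous is continuously irreducible. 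You instead unwind what that citation hides: by an explicit level-set argument you show that for \emph{any} monotone upper semicontinuous decomposition $\mathcal{H}$ of $X$ with arc quotient, the closures $\overline{P_u}=\overline{q_{\mathcal{H}}^{-1}([0,u))}$ are subcontinua with nonempty interior separating the fibers, which forces $\T(\{x\})\subseteq q_{\mathcal{H}}^{-1}(q_{\mathcal{H}}(x))$; hence $\mathcal{D}$ refines every such $\mathcal{H}$, is therefore Thomas's finest decomposition $\mathcal{G}$, and is continuous. Your version is more self-contained (it does not need the converse direction of \cite[Theorem 3.2]{macias1}, only Thomas's characterization of type $\lambda$), at the cost of a longer argument; the paper's version is shorter but leans on the external theorem for exactly the step you prove by hand. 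Two cosmetic remarks: you overload the symbol $q$ as both an endpoint of irreducibility and the quotient map, which you should fix; and you might note explicitly that $q(p)\neq q(q)$, which follows since otherwise $\T(\{p\})$ would be a subcontinuum containing both $p$ and $q$, hence all of $X$, contradicting that it is nowhere dense.
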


\begin{proof}
If $X$ is continuously irreducible, then $\T$ is continuous, by \cite[Theorem 3.2]{macias1}. It is clear that $\T$ is continuous, if $X$ is indecomposable \cite[Theorem 3.1.34]{Macias}. 

Conversely, suppose that $\T\colon 2^{X} \to 2^{X}$ is continuous and $X$ is not indecomposable. By Theorem \ref{teorema3}, there exist a locally connected continuum $Y$ and an open and monotone map $f\colon X\to Y$ such that $\T(A)=f^{-1}(f(A))$ for each $A\in 2^{X}$. Since $X$ is irreducible, $Y$ is irreducible, by \cite[Theorem 3, p.192]{KW2}. Hence, $Y$ is homeomorphic to $[0,1]$. Furthermore, $\T(\{x\})$ has empty interior for each $x\in X$, by Theorem \ref{teorema descomponible}. Thus, each indecomposable subcontinuum of $X$ has empty interior. Therefore, $X$ is a continuum of type $\lambda$ and $X$ is continuously irreducible, by \cite[Theorem 3.2]{macias1}.
\end{proof}

\noindent (J. Camargo)\par
\noindent Escuela de Matem\'aticas, Facultad de Ciencias, Universidad Industrial de
Santander, Ciudad Universitaria, Carrera 27 Calle 9, Bucaramanga,
Santander, A.A. 678, COLOMBIA.\par
\noindent e-mail: jcamargo@saber.uis.edu.co\par
\bigskip

\noindent (C. Uzc\'ategui) \par
\noindent Escuela de Matem\'aticas, Facultad de Ciencias, Universidad Industrial de
Santander, Ciudad Universitaria, Carrera 27 Calle 9, Bucaramanga,
Santander, A.A. 678, COLOMBIA.\par
\noindent e-mail: cuzcatea@saber.uis.edu.co\par
\bigskip


\begin{thebibliography}{100}

\bibitem{bellamy1} D. P. Bellamy, Continua for which the set function $\T$ is continuous, Trans. Amer. Math. Soc., 151 (1970), 581-587.

\bibitem{bellamy1.5} D. P. Bellamy, Some Topics in Modern Continua Theory, in Continua Decompositions Manifolds, (R. H. Bing, W. T. Eaton and M. P. Starbird, eds.), University of Texas Press, (1983), 1-26.

\bibitem{bellamy2} D. P. Bellamy and J. J. Charatonik, The set function $\T$ and contractibility of continua, Bull. Acad. Polon. Sci. S\'er. Sci. Math. Astronom. Phys., 25 (1977), 47- 49.

\bibitem{bellamy3} D. P. Bellamy, L. Fernandez and S. Mac\'ias, On $\T-$closed sets, Topology Appl., 195 (2015), (Special issue honoring the memory of Mary Ellen Rudin), 209-225.

\bibitem{houston} H. Cook, W. T. Ingram, and A. Lelek, A list of problems known as Houston Problem Book, in: Continua with the Houston Problem Book, H. Cook, W. T. Ingram, K. T. Kuperberg, A. Lelek, and P. Minc (eds.), Lecture Notes in Pure and Appl. Math. 170, Dekker, New York, (1995), 365-398.

\bibitem{Illanes-Nadler} A. Illanes and S. B. Nadler Jr., \textit{Hyperspaces, Fundamentals and recent advances,} Pure and Applied Mathematics, Vol. 216, Marcel Dekker, New York, 1999.

\bibitem{jones} F. B. Jones, Concerning nonaposyndetic continua, Amer. J. Math., 70 (1948), 403-413.

\bibitem{Leo} L. Fern\'andez, On strictly point $\T-$symmetric continua. Topol. Proc., 35 (2010), 91-96. 

\bibitem{KW2} K. Kuratowski, \textit{Topology,} Vol II, Academic Press, New York, N. Y., 1968.

\bibitem{macias1.5} S. Mac\'ias, A class of one-dimensional, nonlocally connected continua for which the set function $\T$ is continuous, Houston J. Math. 32 (2006) 161-165.

\bibitem{macias2006} S. Mac{\'{\i}}as, Homogeneous continua for which the set function {$\T$} is continuous, Topology Appl., 153 (2006), 3397--3401.
	
\bibitem{macias1} S. Mac\'ias, On continuously irreducible continua, Topology Appl., 156 (2009), 2357-2363.

\bibitem{Macias0} S. Mac\'ias, A decomposition theorem for a class of continua for which the set function $\T$ is continuous, Colloq. Math., 109 (2007), 163-170.

\bibitem{Macias} S. Mac\'ias, \textit{Topics on Continua}, Pure and Applied Mathematics Series, Vol. 275, Chapman \& Hall/CRC, Taylor \& Francis Group, Boca Raton, London, New York, Singapore, 2005.

\bibitem{Thomas} E. S. Thomas Jr., Monotone decompositions of irreducible continua, Dissertationes Math. (Rozprawy Mat.) 50 (1966) 1-74.


\end{thebibliography}
\end{document}